\documentclass[12pt]{article}%
\usepackage{amssymb}
\usepackage{amsfonts}
\usepackage{amsmath}
\usepackage{graphicx}%
\setcounter{MaxMatrixCols}{30}
\newtheorem{theorem}{Theorem}

\newtheorem{lemma}[theorem]{Lemma}

\newtheorem{remark}[theorem]{Remark}

\newenvironment{proof}[1][Proof]{\noindent\textbf{#1.} }{\ \rule{0.5em}{0.5em}}
\textwidth 14.5cm
\textheight 19cm
\oddsidemargin 0cm
\evensidemargin 0cm
\topmargin 0cm
\begin{document}

\author{Dariusz Idczak\\\textit{Faculty of Mathematics and Computer Science}\\\textit{University of Lodz}\\\textit{Banacha 22, 90-238 Lodz, Poland}\\idczak@math.uni.lodz.pl}
\title{On some strengthening\\of the global implicit function theorem\\with an application to a Cauchy problem \\for an integro-differential Volterra system}
\date{}
\maketitle

\begin{abstract}
In the paper, we improve our earlier results concerning the existence,
uniqueness and differentiability of a global implicit function. Some
application to a Cauchy problem for an integro-differential Volterra system of
nonconvolution type, is given.

\end{abstract}

\section{Introduction}

In paper \cite{ISW}, the conditions for a $C^{1}$-mapping
\[
f:X\rightarrow H
\]
where $X$ is a real Banach space, $H$ - real Hilbert space, to be the
diffeomorphism. i.e. conditions guarantying that for any $y\in H$ there exists
a unique solution $x_{y}\in X$ of the equation
\[
f(x)=y
\]
and the mapping%
\[
H\ni y\longmapsto x_{y}\in X
\]
is continuously differentiable, are given. These conditions are the following:

\begin{itemize}
\item[$\cdot$] the Frechet differential $f^{\prime}(x):X\rightarrow H$ is
bijective, for any $x\in X$,

\item[$\cdot$] the functional%
\[
\varphi:X\ni x\longmapsto(1/2)\left\Vert f(x)-y\right\Vert ^{2}\in\mathbb{R}%
\]
satisfies Palais-Smale condition, for any $y\in H$.
\end{itemize}

The method used in the proof is based on the Mountain Pass Theorem due to
Ambrosetti and Rabinovitz (\cite{AmR}). The obtained result is applied to the
Cauchy problem for an integro-differential Volterra system
\begin{align*}
x^{\prime}(t)+\int_{0}^{t}\Phi(t,\tau,x(\tau))d\tau &  =y(t),\ t\in
\lbrack0,1]\text{ a.e.}\\
x(0)=0  &
\end{align*}
where $y\in L^{2}([0,1],\mathbb{R}^{n})$ and $x\in AC^{2}([0,1],\mathbb{R}%
^{n})$.

In a paper \cite{Id} we extend these results to the case of the equation
\[
F(x,y)=0
\]
where
\[
F:X\times Y\rightarrow H
\]
and $X$, $Y$ are real Banach spaces, $H$ - a real Hilbert space. More
precisely, we formulate sufficient conditions for the existence, uniqueness
and continuous differentiability of a global implicit function $y\longmapsto
x_{y}$ determined by the above equation. The obtained global implicit function
theorem is applied to the Cauchy problem%
\begin{align*}
x^{\prime}(t)+\int_{0}^{t}\Phi(t,\tau,x(\tau),u(\tau))d\tau &  =v(t),\ t\in
J\text{ a.e.,}\\
x(0)=0 &
\end{align*}
where $u$, $v\in L^{2}([0,1],\mathbb{R}^{n})$ and $x\in AC^{2}%
([0,1],\mathbb{R}^{n})$.

In the presented paper, we consider separately

\begin{itemize}
\item[$\cdot$] the existence of a global implicit function

\item[$\cdot$] its uniqueness

\item[$\cdot$] its continuous differentiability
\end{itemize}

We show that the assumptions can be slightly weakened with relation to the
mentioned global implicit function theorem. More precisely, in the theorem on
the existence of a global implicit function, we replace continuous
differentiability of $F$ in Frechet sense, with respect to $(x,y)$, by
differentiability of $F$ in Gateaux sense, with respect to $x$. We also
replace bijectivity of differentials $F_{x}(x,y)$ by the condition $F(x,y)\in
F_{x}(x,y)X$ (cf. also Remark \ref{uw1}). In the theorem on the uniqueness of
the global implicit function, we replace continuous differentiability of $F$
with respect to $(x,y)$ by its continuous differentiability with respect to
$x$. Moreover, in theorems on the uniqueness and continuous differentiability
of the global implicit function, bijectivity of $F_{x}(x,y)$ is assumed only
for points $(x,y)$ satisfying equality $F(x,y)=0$ and for the remaining points
$(x,y)$ one assumes that $F_{x}(x,y)\in F_{x}(x,y)X$ (cf. also Remarks
\ref{uw2}, \ref{uw3}). As in \cite{ISW} and \cite{Id}, we use a variational
approach based on the Mountain Pass Theorem. We apply the obtained theorem to
Cauchy problem%
\begin{align*}
x^{\prime}(t)+\int_{0}^{t}\Phi(t,\tau,x(\tau),u(\tau))d\tau &
=f(t,x(t),v(t)),\ t\in J=[0,1]\text{ a.e.,}\\
x(0)=0 &
\end{align*}
where $u\in L^{\infty}(J,\mathbb{R}^{m})$, $v\in L^{\infty}(J,\mathbb{R}^{r}%
)$, $x\in AC^{2}([0,1],\mathbb{R}^{n})$ (system of the above type is studied
in \cite{Lakshmi}).

Our paper consists of two parts. In the first part, we derive three theorems:
on the existence of a global implicit function, on the uniqueness of this
function as well as on the continuous differentiability of it. The second part
is devoted to some application. We study an integro-differential Cauchy
problem for Volterra system of general - nonconvolution type (\cite{Lakshmi})
with two functional parameters $u$, $v$ that are involved nonlinearly. Problem
of such a type but with the term containing $v$ replaced by $v$, was
investigated in \cite{Id}. We obtain existence and uniqueness as well as the
continuous differentiability of the mapping describing dependence of solutions
on parameters. Differential of this mapping is given, too.

\section{Existence of a global implicit function}

Let $X$ be a real Banach space and $I:X\rightarrow\mathbb{R}$ - a functional
differentiable in Gateaux sense. We say that $I$ satisfies
\textit{Palais-Smale (PS) condition} if any sequence $(x_{m})$ satisfying conditions:

\begin{itemize}
\item[$\bullet$] $\left\vert I(x_{m})\right\vert \leq M$ for all
$m\in\mathbb{N}$ and some $M>0,$

\item[$\bullet$] $I^{\prime}(x_{m})\longrightarrow0,$
\end{itemize}

\noindent admits a convergent subsequence ($I^{\prime}(x_{m})$ denotes the
Gateaux differential of $I$ at $x_{m}$). A sequence $(x_{m})$ satisfying the
above conditions is called the (PS) sequence.

A point $x^{\ast}\in X$ is called the \textit{critical point} of $I$ if
$I^{\prime}(x^{\ast})=0$. In such a case $I(x^{\ast})$ is called the
\textit{critical value} of $I$.

In \cite[Corollary 3.3]{Maw} the following theorem is proved.

\begin{theorem}
\label{Mawhin}Let $X$ be a real Banach space. If $I:X\rightarrow\mathbb{R}$ is
lower semicontinuous, bounded below and differentiable in Gateaux sense
functional satisfying (PS) condition, then there exists a critical point
$x^{\ast}$ of $I$.
\end{theorem}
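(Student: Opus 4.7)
The plan is to build a Palais--Smale sequence at the infimum level using Ekeland's variational principle, extract a convergent subsequence via the (PS) condition, and then observe that the limit is a global minimizer, so that one-sided Gateaux variations force its differential to vanish. Since $I$ is bounded below, $c := \inf_{X} I \in \mathbb{R}$, and the goal is to produce $x^{\ast}\in X$ with $I(x^{\ast})=c$ and $I'(x^{\ast})=0$.

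The construction goes as follows. Because $X$ is complete, $I$ is lower semicontinuous and bounded below, Ekeland's variational principle applies: for each $\varepsilon_{m}=1/m$ one obtains $x_{m}\in X$ with $c\leq I(x_{m})\leq c+\varepsilon_{m}$ and
\[
I(x)\geq I(x_{m})-\varepsilon_{m}\|x-x_{m}\| \quad \text{for every } x\in X.
\]
Substituting $x=x_{m}+th$ with $t>0$, dividing by $t$ and letting $t\to 0^{+}$, the Gateaux differentiability of $I$ gives $I'(x_{m})(h)\geq -\varepsilon_{m}\|h\|$; repeating with $-h$ in place of $h$ yields $\|I'(x_{m})\|\leq \varepsilon_{m}$, so $I'(x_{m})\to 0$. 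Together with $|I(x_{m})|\leq |c|+1$, this makes $(x_{m})$ a (PS) sequence.

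By the (PS) hypothesis, some subsequence $(x_{m_{k}})$ converges to an $x^{\ast}\in X$. Lower semicontinuity yields $I(x^{\ast})\leq \liminf_{k}I(x_{m_{k}})=c$, while the definition of the infimum forces $I(x^{\ast})\geq c$, so $x^{\ast}$ is a global minimizer. For any $h\in X$ and $t>0$, minimality gives $(I(x^{\ast}+th)-I(x^{\ast}))/t\geq 0$; letting $t\to 0^{+}$ produces $I'(x^{\ast})(h)\geq 0$, and the same argument with $-h$ forces $I'(x^{\ast})(h)=0$ for every $h$, i.e.\ $I'(x^{\ast})=0$.

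I expect the only genuine point requiring care to be the translation of the Ekeland slope estimate into the dual-norm decay of the Gateaux differential demanded by the stated (PS) condition; the one-sided variational argument above handles this without invoking any continuity of $I'$. Note that passing to the limit in $I'(x_{m_{k}})\to 0$ is not needed to conclude $I'(x^{\ast})=0$, since the minimizing property of $x^{\ast}$ delivers this directly from Gateaux differentiability alone, which is fortunate given that no continuity of $I'$ is assumed.
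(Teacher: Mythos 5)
Your proof is correct. The paper does not actually prove this statement --- it quotes it as \cite[Corollary 3.3]{Maw} --- and your Ekeland-based argument (minimizing (PS) sequence from Ekeland's variational principle, convergence via (PS), identification of the limit as a global minimizer by lower semicontinuity, and vanishing of the Gateaux differential at a minimizer by one-sided difference quotients) is precisely the standard proof of that corollary, with all the delicate points (the passage from the Ekeland slope estimate to $\Vert I'(x_m)\Vert\le\varepsilon_m$, and the fact that no continuity of $I'$ is needed at the limit) handled correctly.
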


Using the above theorem we obtain

\begin{theorem}
\label{existence}Let $X$ be a real Banach space, $Y$ - a nonempty set, $H$ - a
real Hilbert space. If $F:X\times Y\rightarrow H$ is differentiable with
respect to $x\in X$ in Gateaux sense and

\begin{itemize}
\item[$\bullet$] $F(x,y)\in F_{x}(x,y)X$ for any $(x,y)\in X\times Y$
($F_{x}(x,y)$ denotes the Gateaux differential of $F$ at $(x,y)$ with respect
to $x$)

\item[$\bullet$] the functional
\begin{equation}
\varphi:X\ni x\longmapsto(1/2)\left\Vert F(x,y)\right\Vert ^{2}\in\mathbb{R}
\label{fi}%
\end{equation}
is lower semicontinuous and satisfies (PS) condition for any $y\in Y$,
\end{itemize}

\noindent then, for any $y\in Y$, there exists $x_{y}\in X$ such that
$F(x_{y},y)=0$.
\end{theorem}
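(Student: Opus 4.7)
The plan is to apply Theorem \ref{Mawhin} to the functional $\varphi$ defined in (\ref{fi}) for a fixed but arbitrary $y\in Y$, and then translate the resulting critical point condition into the equation $F(x_y,y)=0$ by using the range hypothesis $F(x,y)\in F_x(x,y)X$.

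First I would fix $y\in Y$ and verify the four hypotheses of Theorem \ref{Mawhin} for $\varphi(x)=(1/2)\|F(x,y)\|^{2}$. Lower semicontinuity and (PS) are assumed directly. Boundedness below is immediate since $\varphi\geq 0$. The only item requiring real work is Gateaux differentiability: for $h\in X$, I expect
\[
\varphi'(x)(h)=\langle F(x,y),F_x(x,y)h\rangle_{H},
\]
which follows from the chain rule applied to the squared-norm functional on $H$ composed with $x\mapsto F(x,y)$, using that $F$ is Gateaux differentiable in $x$ and that the inner product on $H$ is bilinear and continuous. This is a routine check; the only mild care is to make sure that linearity and continuity of $h\mapsto\varphi'(x)(h)$ are inherited from $F_x(x,y)$.

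Having verified these, Theorem \ref{Mawhin} supplies a critical point $x_y\in X$ with $\varphi'(x_y)=0$, i.e.
\[
\langle F(x_y,y),F_x(x_y,y)h\rangle_{H}=0\quad\text{for every }h\in X.
\]
In other words, $F(x_y,y)$ is orthogonal (in $H$) to the whole range $F_x(x_y,y)X$. Now the crucial hypothesis $F(x,y)\in F_x(x,y)X$ enters: it provides some $h_0\in X$ with $F_x(x_y,y)h_0=F(x_y,y)$, so taking $h=h_0$ above yields $\|F(x_y,y)\|^{2}=0$, hence $F(x_y,y)=0$.

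The main (though modest) obstacle is the final orthogonality-to-itself step: it is the place where the range inclusion replaces the stronger bijectivity assumption used in \cite{ISW} and \cite{Id}. Everything else is a standard variational packaging of the problem, and the strength of the statement lies precisely in recognizing that lower semicontinuity of $\varphi$ plus Gateaux (rather than Frechet) differentiability of $F$ in $x$ is enough to run this argument through Theorem \ref{Mawhin}.
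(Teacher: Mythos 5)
Your proposal is correct and follows exactly the paper's argument: fix $y$, check that $\varphi$ is Gateaux differentiable with $\varphi'(x)h=\langle F(x,y),F_x(x,y)h\rangle$, bounded below, lower semicontinuous and (PS), invoke Theorem \ref{Mawhin} to get a critical point $x_y$, and then use the range inclusion $F(x_y,y)\in F_x(x_y,y)X$ to conclude $\|F(x_y,y)\|^2=0$. No differences worth noting.
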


\begin{proof}
Let us fix a point $y\in Y$. Functional $\varphi$, being a superposition of
the mapping $(1/2)\left\Vert \cdot\right\Vert ^{2}$ differentiable in Frechet
sense on $H$ and the mapping $F(\cdot,y)$ differentiable in Gateaux sense on
$X$, is differentiable in Gateaux sense on $X$ and its Gateaux differential
$\varphi^{\prime}(x)$ at $x\in X$ is given by%
\[
\varphi^{\prime}(x)h=\left\langle F(x,y),F_{x}(x,y)h\right\rangle
\]
for $h\in X$. Moreover, $\varphi$ is bounded below and, by assumption, lower
semicontinuous and satisfies (PS) condition. So, by Theorem \ref{Mawhin},
there exists a point $x_{y}\in X$ such that%
\[
\left\langle F(x_{y},y),F_{x}(x_{y},y)h\right\rangle =0
\]
for $h\in X$. Since $F(x_{y},y)\in F_{x}(x_{y},y)X$, $F(x_{y},y)=0$.
\end{proof}

\begin{remark}
The assumption on lower semicontinuity of $\varphi$ can be replaced by a more
restrictive one but concerning directly $F$, namely - continuity of $F$ with
respect to $x$.
\end{remark}

\begin{remark}
\label{uw1}The assumption "$F(x,y)\in F_{x}(x,y)X$ for any $(x,y)\in X\times
Y$" can be replaced by the following one "$F(x,y)\in F_{x}(x,y)X$ for any
$(x,y)\in X\times Y$ such that $\varphi^{\prime}(x)=0$ with $\varphi$
determined by $y$".
\end{remark}

\begin{remark}
It is known (cf. \cite[Corollary 3.4]{Maw}) that if a functional
$I:X\rightarrow\mathbb{R}$ ($X$ - a Banach space) is lower semicontinuous,
bounded below, differentiable in Gateaux sense, has a bounded minimizing
sequence and satisfies the weak (PS) condition (i.e. any bounded (PS) sequence
has a convergent subsequence), then $I$ has a critical point. So, (PS)
condition in Theorem \ref{existence} can be replaced by the following one:
$\varphi$ has a bounded minimizing sequence and satisfies the weak (PS) condition.
\end{remark}

\section{Uniqueness of a global implicit function}

Let $d\neq0$ be a point of $X$ (a real Banach space). By $W_{d}$ we denote the
set%
\[
W_{d}=\{U\subset X;\ U\text{ is open, }0\in U\text{ and }d\notin\overline
{U}\}.
\]

\noindent We have (\cite{AmR}, \cite{Rabin})

\begin{theorem}
[Mountain Pass Theorem]Let $I:X\rightarrow\mathbb{R}$ be a functional which is
continuously differentiable in Gateaux (equivalently, in Frechet) sense,
satisfies (PS) condition and $I(0)=0$. If there exist constants $\rho$,
$\alpha>0$ such that $I\mid_{\partial B(0,\rho)}\geq\alpha$ and $I(e)\leq0$
for some $e\in X\setminus\overline{B(0,\rho)}$, then%
\[
b:=\underset{U\in W_{e}}{\sup}\underset{u\in\partial U}{\inf}I(u)
\]
is the critical value of $I$ and $b\geq\alpha$ ( \footnote{It is known (cf.
\cite{IT}) that if a mapping is continuously differentiable at a point in
Gateaux sense then it is differentiable at this point in Frechet sense and
both differentials coincide.}).
\end{theorem}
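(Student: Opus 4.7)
The plan is to follow the classical Ambrosetti--Rabinowitz variational argument, combining the (PS) condition with the quantitative deformation lemma obtained from a pseudo-gradient vector field for $I$.

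First I would settle the inequality $b\geq\alpha$ by exhibiting one admissible set. The open ball $U_{0}:=B(0,\rho)$ lies in $W_{e}$, since it is open, contains $0$, and $e\in X\setminus\overline{B(0,\rho)}$ by hypothesis; its boundary is exactly the sphere on which $I\geq\alpha$, so $\inf_{\partial U_{0}}I\geq\alpha$ and hence $b\geq\alpha$. Finiteness of $b$ then comes from comparing each $\inf_{\partial U}I$ with the segment $\gamma(t)=te$, $t\in[0,1]$: for every $U\in W_{e}$, continuity forces $\gamma$ to meet $\partial U$ at some point $t^{\ast}e$ (since $\gamma(0)=0\in U$ and $\gamma(1)=e\notin\overline{U}$), giving $\inf_{\partial U}I\leq\max_{t\in[0,1]}I(te)<\infty$.

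The main work is to show that $b$ is a critical value. I would argue by contradiction, assuming the critical set $K_{b}:=\{x\in X:I(x)=b,\ I'(x)=0\}$ is empty. The (PS) condition then provides $\bar{\epsilon}\in(0,\alpha/2)$ with $\inf\{\|I'(x)\|:|I(x)-b|\leq 2\bar{\epsilon}\}>0$. Since $I$ is continuously differentiable in the Gateaux (equivalently, Fr\'echet) sense, a pseudo-gradient vector field for $I$ exists, and the quantitative deformation lemma yields, for each small $\epsilon\in(0,\bar{\epsilon})$, a homeomorphism $\eta:X\to X$ such that (i) $\eta$ is the identity on $\{x:|I(x)-b|\geq 2\bar{\epsilon}\}$, and (ii) $\eta(\{I\leq b+\epsilon\})\subset\{I\leq b-\epsilon\}$.

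To close the argument, I would pick $U\in W_{e}$ with $\inf_{\partial U}I>b-\epsilon$ (which exists by the definition of the supremum) and set $V:=\eta^{-1}(U)$. Because $b\geq\alpha>2\bar{\epsilon}$, both $I(0)=0$ and $I(e)\leq 0$ lie outside $[b-2\bar{\epsilon},b+2\bar{\epsilon}]$, so $\eta$ fixes $0$ and $e$; hence $V$ is open, contains $0$, and does not contain $e$ in its closure, i.e., $V\in W_{e}$. For any $v\in\partial V$ one has $\eta(v)\in\partial U$, so $I(\eta(v))>b-\epsilon$; property (ii) then forces $I(v)>b+\epsilon$. This gives $\inf_{\partial V}I\geq b+\epsilon>b$, contradicting the definition of $b$. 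The main technical obstacle is property (ii) itself, whose proof requires integrating the pseudo-gradient flow for an appropriate time and invoking (PS) to keep $\|I'\|$ bounded away from zero on the relevant level strip, but this is a well-documented standard construction.
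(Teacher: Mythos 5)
The paper does not prove this statement at all: it is quoted as a known result with references to Ambrosetti--Rabinowitz and Rabinowitz, so there is no internal proof to compare against. Your sketch is the standard deformation-lemma argument for the ``dual'' (sup--inf over boundaries of open neighborhoods of $0$) formulation of the mountain pass theorem, and it is correct as an outline. The easy parts are right: $B(0,\rho)\in W_{e}$ gives $b\geq\alpha$, and the segment $t\mapsto te$ must meet $\partial U$ for every $U\in W_{e}$, giving $b\leq\max_{t\in[0,1]}I(te)<\infty$. The contradiction step is also sound: with $K_{b}=\emptyset$, (PS) yields a uniform lower bound on $\|I'\|$ in a level strip around $b$ (otherwise a (PS) sequence would converge to a critical point at level $b$), the pseudo-gradient deformation $\eta$ exists because $I$ is $C^{1}$, and for $U$ with $\inf_{\partial U}I>b-\epsilon$ the set $V=\eta^{-1}(U)$ again lies in $W_{e}$ because $\eta$ fixes $0$ and $e$ (their values lie outside the strip since $2\bar{\epsilon}<\alpha\leq b$) and because $\overline{\eta^{-1}(U)}=\eta^{-1}(\overline{U})$ for a homeomorphism of all of $X$; then $\partial V=\eta^{-1}(\partial U)$ and property (ii) forces $\inf_{\partial V}I\geq b+\epsilon>b$, contradicting the definition of $b$ as a supremum. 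The only substantive content you defer is the quantitative deformation lemma itself, which is legitimately standard; if this were to be written out in full, that lemma (pseudo-gradient field, cutoff, flow for a fixed time) is where the work lies.
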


Using the above theorem we can prove

\begin{theorem}
\label{uniqueness}Let $X$ be a real Banach space, $Y$ - a nonempty set, $H$ -
a real Hilbert space. If $F:X\times Y\rightarrow H$ is continuously
differentiable with respect to $x\in X$ in Gateaux (equivalently, in Frechet)
sense and

\begin{itemize}
\item[$\bullet$] $F_{x}(x,y):X\rightarrow Y$ is bijective for any $(x,y)\in
X\times Y$ such that $F(x,y)=0$ and $F(x,y)\in F_{x}(x,y)X$ for the remaining
$(x,y)\in X\times Y$

\item[$\bullet$] the functional $\varphi$ given by (\ref{fi}) satisfies (PS)
condition for any $y\in Y$,
\end{itemize}

\noindent then, for any $y\in Y$, there exists a unique $x_{y}\in X$ such that
$F(x_{y},y)=0$.
\end{theorem}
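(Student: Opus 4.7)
The plan is to obtain existence as a direct corollary of Theorem \ref{existence} and then establish uniqueness by a Mountain Pass argument leading to a contradiction. For existence, I would observe that continuous Gateaux differentiability of $F(\cdot,y)$ implies Gateaux differentiability and continuity (hence lower semicontinuity of $\varphi$), and that the case-split hypothesis on $F_x$ gives $F(x,y)\in F_x(x,y)X$ at every $(x,y)$ (trivially when $F(x,y)=0$ via bijectivity, and by assumption otherwise). So Theorem \ref{existence} produces some $x_y$ with $F(x_y,y)=0$.

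For uniqueness, I would assume for contradiction that $F(x_1,y)=F(x_2,y)=0$ with $x_1\neq x_2$, set $\tilde\varphi(x):=\varphi(x+x_1)=(1/2)\|F(x+x_1,y)\|^2$, and work with $\tilde\varphi$, which has two distinct zeros at $0$ and $e:=x_2-x_1$. The goal is to manufacture a ``mountain'' between them: a sphere $\partial B(0,\rho)$ with $0<\rho<\|e\|$ on which $\tilde\varphi\geq\alpha>0$, so that the Mountain Pass Theorem yields a third critical point of $\tilde\varphi$ at a strictly positive level.

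The crux---and the step I expect to be the main obstacle---is establishing this local lower bound near $x_1$. Since $F_x(x_1,y)$ is a bijective bounded linear operator between Banach spaces, the open mapping theorem supplies a constant $c>0$ with $\|F_x(x_1,y)h\|\geq c\|h\|$ for all $h\in X$. Using continuity of $x\mapsto F_x(x,y)$, I would pick $\rho$ small enough that $\|F_x(x,y)-F_x(x_1,y)\|\leq c/2$ on $\overline{B(x_1,\rho)}$; then the mean-value formula $F(x,y)=\int_0^1 F_x(x_1+t(x-x_1),y)(x-x_1)\,dt$ (valid because continuous Gateaux differentiability coincides with Frechet differentiability, by the footnote), combined with $F(x_1,y)=0$ and the reverse triangle inequality, gives $\|F(x,y)\|\geq(c/2)\|x-x_1\|$ on that ball, hence $\tilde\varphi\geq c^2\rho^2/8=:\alpha>0$ on $\partial B(0,\rho)$.

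With the mountain in place, I would apply the Mountain Pass Theorem to $\tilde\varphi$: it is continuously differentiable in Gateaux (equivalently Frechet) sense, satisfies (PS) by translation invariance of the hypothesis, vanishes at $0$ and at $e\notin\overline{B(0,\rho)}$, and is $\geq\alpha$ on $\partial B(0,\rho)$. This produces a critical point $x^*$ of $\tilde\varphi$ with $\tilde\varphi(x^*)\geq\alpha>0$. Translating back, $x_0:=x^*+x_1$ satisfies $F(x_0,y)\neq 0$ while $\varphi'(x_0)=0$, that is $\langle F(x_0,y),F_x(x_0,y)h\rangle=0$ for every $h\in X$. Because $F(x_0,y)\neq 0$, the remaining-points hypothesis furnishes $h_0\in X$ with $F(x_0,y)=F_x(x_0,y)h_0$; substituting $h=h_0$ forces $\|F(x_0,y)\|^2=0$, the desired contradiction.
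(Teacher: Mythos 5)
Your proposal is correct and follows essentially the same route as the paper: existence via Theorem \ref{existence} (using exactly your observations that continuity gives lower semicontinuity of $\varphi$ and that the case-split hypothesis yields $F(x,y)\in F_x(x,y)X$ everywhere), followed by a Mountain Pass contradiction applied to the translated functional $\varphi(\cdot+x_1)$ with the same choice of mountain and the same final step using the range condition at the new critical point. The only immaterial difference is how the local lower bound near $x_1$ is obtained: you use the mean value inequality together with continuity of $F_x(\cdot,y)$, while the paper uses the first-order expansion $F(x+x_1,y)=F_x(x_1,y)x+o(x)$ at the single point $x_1$; both rest on the same open-mapping estimate $\|F_x(x_1,y)h\|\geq c\|h\|$ coming from bijectivity.
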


\begin{proof}
Let us fix $y\in Y$. From Theorem \ref{existence} it follows that there exists
a point $x_{y}\in X$ such that $F(x_{y},y)=0$. Let us suppose that there exist
$x_{1}$, $x_{2}\in X$, $x_{1}\neq x_{2}$, such that $F(x_{1},y)=F(x_{2},y)=0$.
Put $e=x_{2}-x_{1}$ and%
\[
g(x)=F(x+x_{1},y)
\]
for $x\in X$. Of course,
\[
g(x)=g^{\prime}(0)x+o(x)=F_{x}^{\prime}(x_{1},y)x+o(x)
\]
for $x\in X$, where $o(x)/\left\Vert x\right\Vert _{X}\rightarrow0$ in $H$
when $x\rightarrow0$ in $X$. So,%
\[
\beta\left\Vert x\right\Vert _{X}\leq\left\Vert F_{x}^{\prime}(x_{1}%
,y)x\right\Vert _{H}\leq\left\Vert g(x)\right\Vert _{H}+\left\Vert
o(x)\right\Vert _{H}\leq\left\Vert g(x)\right\Vert _{H}+(1/2)\beta\left\Vert
x\right\Vert _{X}%
\]
for sufficiently small $\left\Vert x\right\Vert _{X}$ and some $\beta>0$
(existence of such an $\beta$ follows from the bijectivity of $F_{x}^{\prime
}(x_{1},y)$). Thus, there exists $\rho>0$ such that
\[
(1/2)\beta\left\Vert x\right\Vert _{X}\leq\left\Vert g(x)\right\Vert _{H}%
\]
for $x\in\overline{B(0,\rho)}$. Without loss of the generality one may assume
that $\rho<\left\Vert e\right\Vert _{X}$. Put%
\[
\psi(x)=(1/2)\left\Vert g(x)\right\Vert _{H}^{2}=(1/2)\left\Vert
F(x+x_{1},y)\right\Vert _{H}^{2}=\varphi(x+x_{1})
\]
for $x\in X$. Of course, $\psi$ is continuously differentiable on $X$ in
Gateaux sense and%
\[
\psi^{\prime}(x)=\varphi^{\prime}(x+x_{1}).
\]
Consequently, since $\varphi$ satisfies (PS) condition, $\psi$ has this
property, too. Moreover, $\psi(0)=\psi(e)=0$, $e\notin\overline{B(0,\rho)}$
and $\psi(x)\geq\alpha$ for $x\in\partial B(0,\rho)$ with $\alpha
=(1/8)\beta^{2}\rho^{2}>0.$

Thus, the Mountain Pass Theorem implies that $b=\underset{U\in W_{e}}{\sup
}\underset{x\in\partial U}{\inf}\psi(x)$ is a critical value of $\psi$ and
$b\geq\alpha$, i.e. there exists a point $x^{\ast}\in X$ such that
$\psi(x^{\ast})=b>0$ and
\[
\psi^{\prime}(x_{\ast})h=\left\langle F(x^{\ast}+x_{1},y),F_{x}(x^{\ast}%
+x_{1},y)h\right\rangle =0
\]
for $h\in X$. The first condition means that $F(x^{\ast}+x_{1},y)\neq0$. The
second one and relation $F(x^{\ast}+x_{1},y)\in F_{x}^{\prime}(x^{\ast}%
+x_{1},y)X$ imply that $F(x^{\ast}+x_{1},y)=0$. The obtained contradiction
completes the proof.
\end{proof}

\begin{remark}
\label{uw2}The assumption "$F(x,y)\in F_{x}(x,y)X$ for the remaining $(x,y)\in
X\times Y$" can be replaced by the following one "$F(x,y)\in F_{x}(x,y)X$ for
the remaining $(x,y)\in X\times Y$ such that $\varphi^{\prime}(x)=0$ with
$\varphi$ determined by $y$".
\end{remark}

When $X=\mathbb{R}^{n}$, (PS) condition imposed on $\varphi$ can be replaced
by the following (equivalent) one: \textquotedblright$\varphi$ is coercive,
i.e. $\varphi(x)\rightarrow\infty$ when $\left\vert x\right\vert
\rightarrow\infty$\textquotedblright. It follows from the following two lemmas.

\begin{lemma}
If a functional $I:\mathbb{R}^{n}\rightarrow\mathbb{R}$ is coercive, then it
satisfies (PS) condition.
\end{lemma}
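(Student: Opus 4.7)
The plan is to argue directly from the definition of (PS) condition. Let $(x_m)\subset\mathbb{R}^n$ be an arbitrary (PS) sequence for $I$, so that $|I(x_m)|\leq M$ for all $m$ and some $M>0$ (the gradient condition $I'(x_m)\to 0$ is not even needed here). I want to produce a convergent subsequence.

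The key step is to use coercivity in its contrapositive form: since $I(x)\to\infty$ as $|x|\to\infty$, any sequence $(x_m)$ along which $|x_m|\to\infty$ must satisfy $I(x_m)\to\infty$, contradicting $|I(x_m)|\leq M$. More carefully, if $(x_m)$ were unbounded, one could extract a subsequence $(x_{m_k})$ with $|x_{m_k}|\to\infty$, and then $I(x_{m_k})\to\infty$ by coercivity, violating $|I(x_{m_k})|\leq M$. Hence $(x_m)$ is bounded in $\mathbb{R}^n$.

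Once boundedness is established, the proof concludes by invoking the Bolzano--Weierstrass theorem: every bounded sequence in $\mathbb{R}^n$ admits a convergent subsequence. This yields the required convergent subsequence and finishes the verification of the (PS) condition.

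There is no real obstacle here: the argument is essentially a one-line consequence of coercivity plus finite-dimensional compactness, and crucially it does not exploit the derivative bound at all (which is why the converse lemma, presumably stated next, is the more substantial direction requiring some differentiability input).
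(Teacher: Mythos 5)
Your argument is correct and is exactly the one the paper intends: coercivity forces any (PS) sequence (indeed any sequence with $|I(x_m)|\leq M$) to be bounded, and Bolzano--Weierstrass in $\mathbb{R}^{n}$ supplies the convergent subsequence. You have merely spelled out the details that the paper's one-line proof leaves implicit.
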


\begin{proof}
The assertion follows immediately from the boundedness of relatively compact
sets in $\mathbb{R}^{n}$.
\end{proof}

\begin{lemma}
If $X$ is a real Banach space and a functional $I\in C^{1}(X,\mathbb{R})$ is
bounded below and satisfies (PS) condition, then it is coercive.
\end{lemma}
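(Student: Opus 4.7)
The plan is to argue by contradiction using Ekeland's variational principle. Assume $I$ is not coercive; the goal is to manufacture a (PS) sequence whose norm tends to infinity, in direct contradiction with the (PS) condition.

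Since $I$ is bounded below, the function $f:(0,\infty)\to\mathbb{R}$ defined by $f(r):=\inf\{I(x):\|x\|_X\geq r\}$ is non-decreasing and, by non-coercivity, bounded above; hence $L:=\lim_{r\to\infty}f(r)$ is a finite real number, and in particular $f(n)-f(n-1)\to 0$. For each integer $n\geq 2$ I would pick $x_n$ with $\|x_n\|_X\geq n$ and $I(x_n)\leq f(n)+1/n^2$, so that $I(x_n)\to L$.

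Next, apply Ekeland's variational principle to the restriction of $I$ to the closed (hence complete) subset $A_n:=\{x\in X:\|x\|_X\geq n-1\}$, on which the infimum of $I$ equals $f(n-1)$. Setting $\varepsilon_n:=I(x_n)-f(n-1)=(I(x_n)-f(n))+(f(n)-f(n-1))\to 0$ and choosing the auxiliary radius $\lambda=1/2$, Ekeland produces $y_n\in A_n$ with $\|y_n-x_n\|_X\leq 1/2$, $I(y_n)\leq I(x_n)$, and
\[
I(y_n)\leq I(z)+2\varepsilon_n\|y_n-z\|_X\quad\text{for every }z\in A_n.
\]
Because $\|y_n\|_X\geq \|x_n\|_X-1/2\geq n-1/2$, the point $y_n$ lies strictly inside $A_n$ as a subset of $X$, so the displayed inequality is valid for $z=y_n+th$ with small $t>0$ and arbitrary $h\in X$; letting $t\to 0^+$ in the usual two-sided way gives $\|I'(y_n)\|_{X^{\ast}}\leq 2\varepsilon_n\to 0$. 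Since $\inf I\leq I(y_n)\leq I(x_n)\to L$, the sequence $(y_n)$ is a (PS) sequence, and yet $\|y_n\|_X\geq n-1/2\to\infty$, so no subsequence can converge --- contradicting (PS).

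The step I expect to be the main obstacle is the interior issue: Ekeland applied on a closed subset only yields a bound on a one-sided tangential derivative unless the output point is interior to that subset in the ambient Banach space. Choosing the Ekeland set $A_n$ with threshold $n-1$ while picking $x_n$ at distance at least $n$ and the auxiliary radius $\lambda=1/2<1$ is precisely engineered to guarantee $y_n$ is interior to $A_n$ in $X$; the remaining arithmetic, namely $\varepsilon_n\to 0$, then reduces to the fact that $f(n)-f(n-1)\to 0$, which is automatic from the finiteness of $\lim_{r\to\infty}f(r)$.
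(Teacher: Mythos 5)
Your proof is correct. The paper disposes of this lemma by quoting Willem's Corollary 2.7 (if $I\in C^{1}(X,\mathbb{R})$ is bounded below and every sequence with $I(x_n)\to c:=\liminf_{\|x\|\to\infty}I(x)$ and $I'(x_n)\to 0$ is bounded, then $I$ is coercive) and then noting that (PS) forces such sequences to be bounded; you instead reprove that black-box statement from scratch, and your argument is essentially the standard proof of Willem's corollary. The key points all check out: $f(r)=\inf\{I(x):\|x\|\geq r\}$ is non-decreasing and, by non-coercivity, bounded above, so $f(n)-f(n-1)\to 0$ and $\varepsilon_n\to 0$; applying Ekeland on the complete set $A_n=\{\|x\|\geq n-1\}$ with radius $1/2$ starting from $x_n$ with $\|x_n\|\geq n$ guarantees $\|y_n\|\geq n-1/2$, so $y_n$ is interior to $A_n$ and the Ekeland inequality can be tested with $z=y_n+th$ for all $h$, yielding $\|I'(y_n)\|_{X^{*}}\leq 2\varepsilon_n\to 0$ while $I(y_n)$ stays bounded between $\inf I$ and $\sup_n I(x_n)$. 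This produces an unbounded (PS) sequence, contradicting (PS). The only cosmetic point is that Ekeland's principle is usually stated for $\varepsilon>0$, so if $\varepsilon_n=0$ for some $n$ you should replace it by, say, $\max\{\varepsilon_n,1/n\}$; this changes nothing. What your route buys is a self-contained proof; what the paper's route buys is brevity at the cost of an external reference.
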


\begin{proof}
Let us suppose that $I$ is not coercive. So, there exists a sequence $(x_{n})$
such that $\left\Vert x_{n}\right\Vert \rightarrow\infty$ and the sequence
$(I(x_{n}))$ is upper bounded. Of course, it is bounded below, too. Thus,
$c:=\underset{\left\Vert x\right\Vert \rightarrow\infty}{\lim\inf}%
I(x)\in\mathbb{R}$ and using \cite[Corollar 2.7]{Willem} (\footnote{If $I\in
C^{1}(X,\mathbb{R})$ is bounded below and any sequence $(x_{n})$ such that%
\[
I(x_{n})\rightarrow c:=\underset{\left\Vert u\right\Vert \rightarrow\infty
}{\lim\inf}I(x_{n}),\ I^{\prime}(x_{n})\rightarrow0
\]
is bounded, then $I$ is coercive.}) we obtain existence of a sequence
$(x_{n})$ such that $I(x_{n})\rightarrow c$, $I^{\prime}(x_{n})\rightarrow0$
and $\left\Vert x_{n}\right\Vert \rightarrow\infty$. It contradicts (PS) condition.
\end{proof}

\section{Global implicit function theorem}

From Theorems \ref{existence}, \ref{uniqueness} and classical local implicit
function theorem we immediately obtain the following global implicit function theorem.

\begin{theorem}
\label{gift2}Let $X$, $Y$ be real Banach spaces, $H$ - a real Hilbert space.
If $F:X\times Y\rightarrow H$ is continuously differentiable in Gateaux
(equivalently, in Frechet) sense with respect to $(x,y)\in X\times Y$ and

\begin{itemize}
\item[$\bullet$] differential $F_{x}(x,y):X\rightarrow H$ is bijective for any
$(x,y)\in X\times Y$ such that $F(x,y)=0$ and $F(x,y)\in F_{x}(x,y)X$ for the
remaining $(x,y)\in X\times Y$

\item[$\bullet$] the functional $\varphi$ given by (\ref{fi}) satisfies the
(PS) condition for any $y\in Y$,
\end{itemize}

\noindent then there exists a unique function $\lambda:Y\rightarrow X$ such
that $F(\lambda(y),y)=0$ for any $y\in Y$ and this function is continuously
differentiable in Gateaux (equivalently, in Frechet) sense on $Y$ with
differential $\lambda^{\prime}(y)$ at $y$ given by
\begin{equation}
\lambda^{\prime}(y)=-[F_{x}(\lambda(y),y)]^{-1}\circ F_{y}(\lambda(y),y).
\label{wzor}%
\end{equation}

\end{theorem}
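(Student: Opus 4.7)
The plan is to patch together the three ingredients that are now available: Theorem \ref{existence} to produce, for each $y\in Y$, a solution $x_y$ of $F(x,y)=0$; Theorem \ref{uniqueness} to show this $x_y$ is unique, so that the rule $\lambda(y):=x_y$ defines an honest function $Y\to X$; and the classical local implicit function theorem in Banach spaces to pick up the continuous differentiability of $\lambda$ at each point. First I would verify that the hypotheses of Theorems \ref{existence} and \ref{uniqueness} are implied by those stated here. Continuous Gateaux differentiability of $F$ with respect to $(x,y)$ gives Gateaux differentiability with respect to $x$ and, via the footnote, even continuous Frechet differentiability; this makes each $\varphi(\cdot)=(1/2)\|F(\cdot,y)\|^{2}$ continuous (in particular lower semicontinuous) and continuously Gateaux differentiable. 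The range condition ``$F(x,y)\in F_{x}(x,y)X$ for every $(x,y)$'' is automatic under our hypothesis, since bijectivity of $F_{x}(x,y)$ on the zero set of $F$ gives surjectivity, while for the remaining points the condition is assumed outright. Therefore Theorem \ref{existence} delivers existence and Theorem \ref{uniqueness} delivers uniqueness, yielding a well defined $\lambda:Y\to X$.

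Next I would obtain continuous differentiability of $\lambda$ at an arbitrary fixed $y_{0}\in Y$. At the point $(\lambda(y_{0}),y_{0})$ the map $F$ is continuously Frechet differentiable with $F(\lambda(y_{0}),y_{0})=0$ and $F_{x}(\lambda(y_{0}),y_{0}):X\to H$ a bijective bounded linear operator; by the open mapping theorem its inverse is bounded, so the classical implicit function theorem applies and produces an open neighbourhood $V\subset Y$ of $y_{0}$, an open neighbourhood $U\subset X$ of $\lambda(y_{0})$, and a continuously Frechet differentiable map $\mu:V\to U$ such that, for $(x,y)\in U\times V$, the equation $F(x,y)=0$ is equivalent to $x=\mu(y)$, with
\[
\mu^{\prime}(y)=-[F_{x}(\mu(y),y)]^{-1}\circ F_{y}(\mu(y),y).
\]

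It remains to identify $\mu$ with the restriction of $\lambda$ to $V$. For every $y\in V$ the element $\mu(y)\in X$ satisfies $F(\mu(y),y)=0$, and by the global uniqueness established above we must have $\mu(y)=\lambda(y)$. Hence $\lambda$ coincides with $\mu$ on $V$, so $\lambda$ is continuously Frechet (equivalently, Gateaux) differentiable at $y_{0}$, and formula (\ref{wzor}) follows at once from the expression for $\mu^{\prime}(y_{0})$. Since $y_{0}$ was arbitrary, $\lambda$ is continuously differentiable on all of $Y$ with the asserted derivative.

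The main obstacle, and the only nontrivial verification, is really the coherence step: that the locally defined implicit function $\mu$ agrees with $\lambda$. This is where both Theorem \ref{uniqueness} and the fact that the classical implicit function theorem selects the unique solution within a neighbourhood $U$ are used. Everything else is essentially bookkeeping: checking that the weaker Gateaux hypotheses upgrade to Frechet (via the footnote) so that the classical theorem can be invoked, and checking that the range condition $F(x,y)\in F_{x}(x,y)X$ holds in all points needed by Theorems \ref{existence} and \ref{uniqueness}.
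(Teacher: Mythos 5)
Your proposal is correct and follows exactly the route the paper intends: existence from Theorem \ref{existence}, uniqueness from Theorem \ref{uniqueness}, and continuous differentiability plus formula (\ref{wzor}) from the classical local implicit function theorem, with the local solution identified with $\lambda$ via global uniqueness. The paper's own proof is a one-line remark leaving these verifications implicit; you have merely (and correctly) written them out.
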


\begin{proof}
Of course, it is sufficient to put $\lambda(y)=x_{y}$ where $x_{y}$ is a
solution to $F(x,y)=0$, given by Theorem \ref{uniqueness}.
\end{proof}

\begin{remark}
\label{uw3}Remark \ref{uw3} is applicable.
\end{remark}

\section{An application}

Let us consider the following control system%
\begin{equation}
x^{\prime}(t)+\int_{0}^{t}\Phi(t,\tau,x(\tau),u(\tau))d\tau
=f(t,x(t),v(t)),\ t\in J=[0,1]\text{ a.e.,} \label{cs}%
\end{equation}
where $\Phi:P_{\Delta}\times\mathbb{R}^{n}\times\mathbb{R}^{m}\rightarrow
\mathbb{R}^{n}$ ($P_{\Delta}=\{(t,\tau)\in J\times J;\tau\leq t\}$),
$f:J\times\mathbb{R}^{n}\times\mathbb{R}^{r}\rightarrow\mathbb{R}^{n}$, $x\in
AC_{0}^{2}=AC_{0}^{2}(J,\mathbb{R}^{n})=\{x:J\rightarrow\mathbb{R}^{n}$; $x$
is absolutely continuous, $x(0)=0$, $x^{\prime}\in L^{2}(J,\mathbb{R}^{n})\}$,
$u\in L^{\infty}(J,\mathbb{R}^{m})$, $v\in L^{\infty}(J,\mathbb{R}^{r})$. On
the functions $\Phi$, $f$ we assume that

\begin{itemize}
\item[$\cdot$] $\Phi(\cdot,\cdot,x,u)$ is measurable on $P_{\Delta}$ for any
$x\in\mathbb{R}^{n}$, $u\in\mathbb{R}^{m}$; $\Phi(t,\tau,\cdot,\cdot)$ is
continuously differentiable on $\mathbb{R}^{n}\times\mathbb{R}^{m}$ for
$(t,\tau)\in P_{\Delta}$ a.e.

\item[$\cdot$] there exist constants $c$, $d>0$ and functions $a$, $b\in
L^{2}(P_{\Delta},\mathbb{R}_{0}^{+})$, $\omega\in C(\mathbb{R}_{0}%
^{+},\mathbb{R}_{0}^{+})$ such that%
\[
\left\vert \Phi(t,\tau,x,u)\right\vert \leq a(t,\tau)\left\vert x\right\vert
+b(t,\tau)\omega(\left\vert u\right\vert )
\]%
\[
\left\vert \Phi_{x}(t,\tau,x,u)\right\vert \leq c\omega(\left\vert
x\right\vert )+d\omega(\left\vert u\right\vert )\text{,}%
\]%
\[
\left\vert \Phi_{u}(t,\tau,x,u)\right\vert \leq a(t,\tau)\omega(\left\vert
x\right\vert )+b(t,\tau)\omega(\left\vert u\right\vert )
\]
for $(t,\tau)\in P_{\Delta}$ a.e., $x\in\mathbb{R}^{n}$, $u\in\mathbb{R}^{m}$

\item[$\cdot$] $f(\cdot,x,u)$ is measurable on $J$ for any $x\in\mathbb{R}%
^{n}$, $v\in\mathbb{R}^{r}$; $f(t,\cdot,\cdot)$ is continuously differentiable
on $\mathbb{R}^{n}\times\mathbb{R}^{r}$ for $t\in J$ a.e.

\item[$\cdot$] there exist constants $c_{f}$, $d_{f}>0$ and functions $a_{f}$,
$b_{f}\in L^{2}(J,\mathbb{R}_{0}^{+})$, $\varkappa\in C(\mathbb{R}_{0}%
^{+},\mathbb{R}_{0}^{+})$ such that%
\[
\left\vert f(t,x,v)\right\vert \leq a_{f}(t)\left\vert x\right\vert
+b_{f}(t)\varkappa(\left\vert v\right\vert )
\]%
\[
\left\vert f_{x}(t,x,v)\right\vert \leq c_{f}\varkappa(\left\vert x\right\vert
)+d_{f}\varkappa(\left\vert v\right\vert )
\]%
\[
\left\vert f_{v}(t,x,v)\right\vert \leq a_{f}(t)\varkappa(\left\vert
x\right\vert )+b_{f}(t)\varkappa(\left\vert v\right\vert )
\]
for $t\in J$ a.e., $x\in\mathbb{R}^{n}$, $v\in\mathbb{R}^{r}$

\item[$\cdot$] the inequality
\[
\left\Vert a\right\Vert _{L^{2}(P_{\Delta},\mathbb{R})}+2(\int_{0}^{1}%
(a_{f}(t))^{2}tdt)^{(1/2)}(1+\left\Vert a\right\Vert _{L^{2}(P_{\Delta
},\mathbb{R})})<\sqrt{2}/2
\]
is satisfied.
\end{itemize}

We shall check that the mapping
\[
F:AC_{0}^{2}\times L^{\infty}(J,\mathbb{R}^{m})\times L^{\infty}%
(J,\mathbb{R}^{r})\rightarrow L^{2}(J,\mathbb{R}^{n}),
\]%
\[
F(x,u,v)=x^{\prime}(t)+\int_{0}^{t}\Phi(t,\tau,x(\tau),u(\tau))d\tau
-f(t,x(t),v(t)),
\]
satisfies assumptions of global implicit function theorem with $X=AC_{0}^{2}$,
$Y=L^{\infty}(J,\mathbb{R}^{m})\times L^{\infty}(J,\mathbb{R}^{r})$,
$H=L^{2}(J,\mathbb{R}^{n})$.

In a standard way, one can check that $F$ is continuously differentiable in
Gateaux (equivalently, in Frechet) sense on $AC_{0}^{2}\times L^{\infty
}(J,\mathbb{R}^{m})\times L^{\infty}(J,\mathbb{R}^{r})$ and the mappings%
\[
F_{x}(x,u,v):AC_{0}^{2}\rightarrow L^{2}(J,\mathbb{R}^{n}),
\]%
\[
F_{x}(x,u,v)h=h^{\prime}(t)+\int_{0}^{t}\Phi_{x}(t,\tau,x(\tau),u(\tau
))h(\tau)d\tau-f_{x}(t,x(t),v(t))h(t)
\]%
\[
F_{u,v}(x,u,v):L^{\infty}(J,\mathbb{R}^{m})\times L^{\infty}(J,\mathbb{R}%
^{n})\rightarrow L^{2}(J,\mathbb{R}^{n}),
\]%
\[
F_{u,v}(x,u,v)(f,g)=\int_{0}^{t}\Phi_{u}(t,\tau,x(\tau),u(\tau))f(\tau
)d\tau-f_{v}(t,x(t),v(t))g(t)
\]
are the differentials of $F$ in $x$ and $(u,v)$, respectively. From Appendix
it follows that $F_{x}(x,u,v)$ is \textquotedblright one-one\textquotedblright%
\ and \textquotedblright onto\textquotedblright.

Now, let us fix a function $(u,v)\in L^{\infty}(J,\mathbb{R}^{m})\times
L^{\infty}(J,\mathbb{R}^{r})$ and consider the functional%
\begin{multline*}
\varphi:AC_{0}^{2}\ni x\longmapsto(1/2)\left\Vert F(x,u,v)\right\Vert ^{2}\\
=(1/2)\int\nolimits_{0}^{1}\left\vert x^{\prime}(t)+\int_{0}^{t}\Phi
(t,\tau,x(\tau),u(\tau))d\tau-f(t,x(t),v(t))\right\vert ^{2}\in\mathbb{R}%
\text{.}%
\end{multline*}
It is easy to see that, for any $x\in AC_{0}^{2}$,%
\begin{multline*}
\left\vert \varphi(x)\right\vert \geq(1/2)\left\Vert x\right\Vert _{AC_{0}%
^{2}}^{2}+\int_{0}^{1}x^{\prime}(t)\int_{0}^{t}\Phi(t,\tau,x(\tau
),u(\tau))d\tau dt\\
-\int_{0}^{1}x^{\prime}(t)f(t,x(t),u(t))dt-\int_{0}^{1}f(t,x(t),v(t))\int
_{0}^{t}\Phi(t,\tau,x(\tau),u(\tau))d\tau dt.
\end{multline*}
Let us observe that%
\begin{multline*}
\left\vert \int_{0}^{1}x^{\prime}(t)\int_{0}^{t}\Phi(t,\tau,x(\tau
),u(\tau))d\tau dt\right\vert \leq\int_{0}^{1}\left\vert x^{\prime
}(t)\right\vert (\int_{0}^{t}(a(t,\tau)\left\vert x(\tau)\right\vert \\
+b(t,\tau)\omega(\left\vert u(\tau)\right\vert ))d\tau)dt\\
\leq\int_{0}^{1}\left\vert x^{\prime}(t)\right\vert ((\int_{0}^{t}a^{2}%
(t,\tau)d\tau)^{(1/2)}(\sqrt{2}/2)\left\Vert x\right\Vert _{AC_{0}^{2}}+A%
{\displaystyle\int\nolimits_{0}^{t}}
b(t,\tau)d\tau)dt\\
\leq(\sqrt{2}/2)\left\Vert x\right\Vert _{AC_{0}^{2}}(\int_{0}^{1}\left\vert
x^{\prime}(t)\right\vert ^{2}dt)^{(1/2)}(\int_{0}^{1}(\int_{0}^{t}a^{2}%
(t,\tau)d\tau)dt)^{(1/2)}\\
+A(\int_{0}^{1}\left\vert x^{\prime}(t)\right\vert ^{2}dt)^{(1/2)}(\int
_{0}^{1}(%
{\displaystyle\int\nolimits_{0}^{t}}
b(t,\tau)d\tau)^{2}dt)^{1/2}\\
\leq(\sqrt{2}/2)\left\Vert a\right\Vert _{L^{2}(P_{\Delta},\mathbb{R}%
)}\left\Vert x\right\Vert _{AC_{0}^{2}}^{2}+A\left\Vert b\right\Vert
_{L^{2}(P_{\Delta},\mathbb{R})}\left\Vert x\right\Vert _{AC_{0}^{2}}%
\end{multline*}
where $A=\underset{t\in\lbrack0,1]}{ess\sup}\omega(\left\vert u(t)\right\vert
)$. Also,%
\begin{multline*}
\int_{0}^{1}\left\vert f(t,x(t),u(t))\right\vert ^{2}dt\leq\int_{0}^{1}\left(
a_{f}(t)\left\vert x(t)\right\vert +b_{f}(t)\varkappa(\left\vert
u(t)\right\vert )\right)  ^{2}dt\\
\leq2\int_{0}^{1}\left(  (a_{f}(t))^{2}\left\vert x(t)\right\vert ^{2}%
+(b_{f}(t))^{2}(\varkappa(\left\vert u(t)\right\vert ))^{2}\right)  dt\\
\leq2(\int_{0}^{1}(a_{f}(t))^{2}tdt\left\Vert x\right\Vert _{AC_{0}^{2}}%
^{2}+B\left\Vert b_{f}\right\Vert _{L^{2}(J,\mathbb{R}^{n})}^{2})
\end{multline*}
where $B=\underset{t\in\lbrack0,1]}{ess\sup}(\varkappa(\left\vert
u(t)\right\vert ))^{2}$, so,
\begin{align*}
\left\vert \int_{0}^{1}x^{\prime}(t)f(t,x(t),u(t))dt\right\vert  &
\leq\left\Vert x\right\Vert _{AC_{0}^{2}}(\int_{0}^{1}\left\vert
f(t,x(t),u(t))\right\vert ^{2}dt)^{(1/2)}\\
&  \leq\left\Vert x\right\Vert _{AC_{0}^{2}}(2(\int_{0}^{1}(a_{f}%
(t))^{2}tdt\left\Vert x\right\Vert _{AC_{0}^{2}}^{2}+B\left\Vert
b_{f}\right\Vert _{L^{2}(J,\mathbb{R}^{n})}^{2}))^{(1/2)}\\
&  \leq\left\Vert x\right\Vert _{AC_{0}^{2}}(\sqrt{2}(\int_{0}^{1}%
(a_{f}(t))^{2}tdt)^{(1/2)}\left\Vert x\right\Vert _{AC_{0}^{2}}+\sqrt
{2B}\left\Vert b_{f}\right\Vert _{L^{2}(J,\mathbb{R}^{n})})
\end{align*}
Similarly,%
\begin{multline*}
\left\vert \int_{0}^{1}f(t,x(t),v(t))\int_{0}^{t}\Phi(t,\tau,x(\tau
),u(\tau))d\tau dt\right\vert \\
\leq(\int_{0}^{1}\left\vert f(t,x(t),u(t))\right\vert ^{2}dt)^{(1/2)}(\int
_{0}^{1}\left\vert \int_{0}^{t}\Phi(t,\tau,x(\tau),u(\tau))d\tau\right\vert
^{2}dt)^{(1/2)}\\
\leq(2(\int_{0}^{1}(a_{f}(t))^{2}tdt\left\Vert x\right\Vert _{AC_{0}^{2}}%
^{2}+B\left\Vert b_{f}\right\Vert _{L^{2}(J,\mathbb{R}^{n})}^{2}))^{(1/2)}\\
\times(\int_{0}^{1}((\int_{0}^{t}a^{2}(t,\tau)d\tau)^{(1/2)}(\sqrt
{2}/2)\left\Vert x\right\Vert _{AC_{0}^{2}}+A(\int_{0}^{t}b^{2}(t,\tau
)d\tau))^{(1/2)})^{2}dt)^{(1/2)}\\
\leq(2(\int_{0}^{1}(a_{f}(t))^{2}tdt\left\Vert x\right\Vert _{AC_{0}^{2}}%
^{2}+B\left\Vert b_{f}\right\Vert _{L^{2}(J,\mathbb{R}^{n})}^{2}%
))^{(1/2)}(\left\Vert a\right\Vert _{L^{2}(P_{\Delta},\mathbb{R})}%
^{2}\left\Vert x\right\Vert _{AC_{0}^{2}}^{2}+2A^{2}\left\Vert b\right\Vert
_{L^{2}(P_{\Delta},\mathbb{R})}^{2})^{(1/2)}\\
\leq(\sqrt{2}(\int_{0}^{1}(a_{f}(t))^{2}tdt)^{(1/2)}\left\Vert x\right\Vert
_{AC_{0}^{2}}+\sqrt{2B}\left\Vert b_{f}\right\Vert _{L^{2}(J,\mathbb{R}^{n}%
)})(\left\Vert a\right\Vert _{L^{2}(P_{\Delta},\mathbb{R})}\left\Vert
x\right\Vert _{AC_{0}^{2}}+\sqrt{2}A\left\Vert b\right\Vert _{L^{2}(P_{\Delta
},\mathbb{R})}).
\end{multline*}
Finally,%
\begin{multline*}
\left\vert \varphi(x)\right\vert \geq(1/2)\left\Vert x\right\Vert _{AC_{0}%
^{2}}^{2}-(\sqrt{2}/2)\left\Vert a\right\Vert _{L^{2}(P_{\Delta},\mathbb{R}%
)}\left\Vert x\right\Vert _{AC_{0}^{2}}^{2}-A\left\Vert b\right\Vert
_{L^{2}(P_{\Delta},\mathbb{R})}\left\Vert x\right\Vert _{AC_{0}^{2}}\\
-\left\Vert x\right\Vert _{AC_{0}^{2}}(\sqrt{2}(\int_{0}^{1}(a_{f}%
(t))^{2}tdt)^{(1/2)}\left\Vert x\right\Vert _{AC_{0}^{2}}+\sqrt{2B}\left\Vert
b_{f}\right\Vert _{L^{2}(J,\mathbb{R}^{n})})\\
-(\sqrt{2}(\int_{0}^{1}(a_{f}(t))^{2}tdt)^{(1/2)}\left\Vert x\right\Vert
_{AC_{0}^{2}}+\sqrt{2B}\left\Vert b_{f}\right\Vert _{L^{2}(J,\mathbb{R}^{n}%
)})(\left\Vert a\right\Vert _{L^{2}(P_{\Delta},\mathbb{R})}\left\Vert
x\right\Vert _{AC_{0}^{2}}+\sqrt{2}A\left\Vert b\right\Vert _{L^{2}(P_{\Delta
},\mathbb{R})})\\
=((1/2)-(\sqrt{2}/2)\left\Vert a\right\Vert _{L^{2}(P_{\Delta},\mathbb{R}%
)}-\sqrt{2}(\int_{0}^{1}(a_{f}(t))^{2}tdt)^{(1/2)}\\
-\sqrt{2}(\int_{0}^{1}(a_{f}(t))^{2}tdt)^{(1/2)}\left\Vert a\right\Vert
_{L^{2}(P_{\Delta},\mathbb{R})})\left\Vert x\right\Vert _{AC_{0}^{2}}^{2}\\
-(A+\sqrt{2B}\left\Vert b_{f}\right\Vert _{L^{2}(J,\mathbb{R}^{n})}A\left\Vert
b\right\Vert _{L^{2}(P_{\Delta},\mathbb{R})}+2A\left\Vert b\right\Vert
_{L^{2}(P_{\Delta},\mathbb{R})}(\int_{0}^{1}(a_{f}(t))^{2}tdt)^{(1/2)}\\
+\sqrt{2B}\left\Vert b_{f}\right\Vert _{L^{2}(J,\mathbb{R}^{n})}\left\Vert
a\right\Vert _{L^{2}(P_{\Delta},\mathbb{R})})\left\Vert x\right\Vert
_{AC_{0}^{2}}+\sqrt{2B}\left\Vert b_{f}\right\Vert _{L^{2}(J,\mathbb{R}^{n}%
)}\sqrt{2}A\left\Vert b\right\Vert _{L^{2}(P_{\Delta},\mathbb{R})}%
\end{multline*}
for $x\in AC_{0}^{2}$. This means that $\varphi$ is coercive.

In a standard way, we check that the differential $\varphi^{\prime}(x)$ of
$\varphi$ at $x$ is given by%
\begin{multline*}
\varphi^{\prime}(x)h=\int\nolimits_{0}^{1}(x^{\prime}(t)+\int_{0}^{t}%
\Phi(t,\tau,x(\tau),u(\tau))d\tau-f(t,x(t),v(t)))\\
\times(h^{\prime}(t)+\int_{0}^{t}\Phi_{x}(t,\tau,x(\tau),u(\tau))h(\tau
)d\tau-f_{x}(t,x(t),v(t))h(t))dt
\end{multline*}
for $h\in AC_{0}^{2}$. Consequently, for any $x_{m}$, $x_{0}\in AC_{0}^{2}$,%
\begin{multline*}
\varphi^{\prime}(x_{m})(x_{m}-x_{0})=\int\nolimits_{0}^{1}(x_{m}^{\prime
}(t)+\int_{0}^{t}\Phi(t,\tau,x_{m}(\tau),u(\tau))d\tau-f(t,x_{m}(t),v(t)))\\
((x_{m}^{\prime}(t)-x_{0}^{\prime}(t))+\int_{0}^{t}\Phi_{x}(t,\tau,x_{m}%
(\tau),u(\tau))(x_{m}(\tau)-x_{0}(\tau))d\tau\\
-f_{x}(t,x_{m}(t),v(t))(x_{m}(t)-x_{0}(t)))dt
\end{multline*}%
\begin{multline*}
\varphi^{\prime}(x_{0})(x_{m}-x_{0})=\int\nolimits_{0}^{1}(x_{0}^{\prime
}(t)+\int_{0}^{t}\Phi(t,\tau,x_{0}(\tau),u(\tau))d\tau-f(t,x_{0}(t),v(t)))\\
((x_{m}^{\prime}(t)-x_{0}^{\prime}(t))+\int_{0}^{t}\Phi_{x}(t,\tau,x_{0}%
(\tau),u(\tau))(x_{m}(\tau)-x_{0}(\tau))d\tau\\
-f_{x}(t,x_{0}(t),v(t))(x_{m}(t)-x_{0}(t)))dt
\end{multline*}
and
\[
\varphi^{\prime}(x_{m})(x_{m}-x_{0})-\varphi^{\prime}(x_{0})(x_{m}%
-x_{0})=\left\Vert x_{m}-x_{0}\right\Vert _{AC_{0}^{2}}^{2}+\sum
\nolimits_{i=1}^{14}\psi_{i}(x_{m})
\]
where%
\[
\psi_{1}(x_{m})=\int\nolimits_{0}^{1}(\int_{0}^{t}\Phi(t,\tau,x_{m}%
(\tau),u(\tau))d\tau-\int_{0}^{t}\Phi(t,\tau,x_{0}(\tau),u(\tau))d\tau
)((x_{m}^{\prime}(t)-x_{0}^{\prime}(t))dt
\]%
\[
\psi_{2}(x_{m})=\int\nolimits_{0}^{1}(f(t,x_{0}(t),u(t))-f(t,x_{m}%
(t),u(t)))(x_{m}^{\prime}(t)-x_{0}^{\prime}(t))dt
\]%
\[
\psi_{3}(x_{m})=\int\nolimits_{0}^{1}x_{m}^{\prime}(t)\int_{0}^{t}\Phi
_{x}(t,\tau,x_{m}(\tau),u(\tau))(x_{m}(\tau)-x_{0}(\tau))d\tau dt
\]%
\[
\psi_{4}(x_{m})=\int\nolimits_{0}^{1}x_{0}^{\prime}(t)\int_{0}^{t}\Phi
_{x}(t,\tau,x_{0}(\tau),u(\tau))(x_{m}(\tau)-x_{0}(\tau))d\tau dt
\]%
\[
\psi_{5}(x_{m})=\int\nolimits_{0}^{1}(\int_{0}^{t}\Phi(t,\tau,x_{m}%
(\tau),u(\tau))d\tau\int_{0}^{t}\Phi_{x}(t,\tau,x_{m}(\tau),u(\tau
))(x_{m}(\tau)-x_{0}(\tau))d\tau)dt
\]%
\[
\psi_{6}(x_{m})=-\int\nolimits_{0}^{1}(\int_{0}^{t}\Phi(t,\tau,x_{0}%
(\tau),u(\tau))d\tau\int_{0}^{t}\Phi_{x}(t,\tau,x_{0}(\tau),u(\tau
))(x_{m}(\tau)-x_{0}(\tau))d\tau)dt
\]%
\[
\psi_{7}(x_{m})=-\int\nolimits_{0}^{1}(f(t,x_{m}(t),v(t))\int_{0}^{t}\Phi
_{x}(t,\tau,x_{m}(\tau),u(\tau))(x_{m}(\tau)-x_{0}(\tau))d\tau)dt
\]%
\[
\psi_{8}(x_{m})=\int\nolimits_{0}^{1}(f(t,x_{0}(t),v(t))\int_{0}^{t}\Phi
_{x}(t,\tau,x_{0}(\tau),u(\tau))(x_{m}(\tau)-x_{0}(\tau))d\tau)dt
\]%
\[
\psi_{9}(x_{m})=-\int\nolimits_{0}^{1}x_{m}^{\prime}(t)f_{x}(t,x_{m}%
(t),v(t))(x_{m}(t)-x_{0}(t)))dt
\]%
\[
\psi_{10}(x_{m})=-\int\nolimits_{0}^{1}x_{0}^{\prime}(t)f_{x}(t,x_{0}%
(t),v(t))(x_{m}(t)-x_{0}(t)))dt
\]%
\[
\psi_{11}(x_{m})=-\int\nolimits_{0}^{1}\int_{0}^{t}\Phi(t,\tau,x_{m}%
(\tau),u(\tau))d\tau f_{x}(t,x_{m}(t),v(t))(x_{m}(t)-x_{0}(t)))dt
\]%
\[
\psi_{12}(x_{m})=\int\nolimits_{0}^{1}\int_{0}^{t}\Phi(t,\tau,x_{0}%
(\tau),u(\tau))d\tau f_{x}(t,x_{0}(t),v(t))(x_{m}(t)-x_{0}(t)))dt
\]%
\[
\psi_{13}(x_{m})=-\int\nolimits_{0}^{1}f(t,x_{m}(t),v(t))f_{x}(t,x_{m}%
(t),v(t))(x_{m}(t)-x_{0}(t)))dt
\]%
\[
\psi_{14}(x_{m})=\int\nolimits_{0}^{1}f(t,x_{0}(t),v(t))f_{x}(t,x_{0}%
(t),v(t))(x_{m}(t)-x_{0}(t)))dt
\]
We shall show that $\varphi$ satisfies (PS) condition. Indeed, if $(x_{m})$ is
a (PS) sequence for $\varphi$, then the coercivity of $\varphi$ implies its
boundedness. Consequently, there exists a subsequence $(x_{m_{k}})$ which is
weakly convergent in $AC_{0}^{2}$ to some $x_{0}$\thinspace(so, $x_{m_{k}%
}\rightrightarrows x_{0}$ uniformly on $[0,1]$ and $x_{m_{k}}^{\prime
}\rightharpoonup x_{0}^{\prime}$ weakly in $L^{2}(I,\mathbb{R}^{n})$).

First, we shall show that $\psi_{i}(x_{m_{k}})\underset{k\rightarrow\infty
}{\longrightarrow}0$ for $i=1,...,14$.

Let us consider the first term $\psi_{1}(x_{m_{k}})$. From the Lebesgue
dominated convergence theorem it follows that
\[
\int_{0}^{t}(\Phi(t,\tau,x_{m_{k}}(\tau))-\Phi(t,\tau,x_{0}(\tau
)))d\tau\underset{m\rightarrow\infty}{\longrightarrow}0
\]
for $t\in\lbrack0,1]$ a.e. Moreover (cf. (A$_{2}$)),%
\begin{multline*}
\left\vert \int_{0}^{t}(\Phi(t,\tau,x_{m_{k}}(\tau))-\Phi(t,\tau,x_{0}%
(\tau)))d\tau\right\vert \\
\leq2\int_{0}^{t}(a(t,\tau)\left\vert x_{m_{k}}(\tau)\right\vert
+b(t,\tau)\omega(\left\vert u(\tau)\right\vert ))d\tau\leq2\int_{0}%
^{t}(a(t,\tau)M+b(t,\tau))d\tau
\end{multline*}
where $M>0$ is such that%
\[
\left\vert x_{m_{k}}(\tau)\right\vert \leq M,\ \tau\in\lbrack0,1],\ k=0,1,...
\]
Since the function%
\[
\lbrack0,1]\ni t\longmapsto2%
{\textstyle\int\nolimits_{0}^{t}}
(a(t,\tau)M+b(t,\tau)\omega(\left\vert u(\tau)\right\vert ))d\tau\in\mathbb{R}%
\]
belongs to $L^{2}([0,1],\mathbb{R}^{n})$, using once again the Lebesgue
dominated convergence theorem we assert that
\[
\int_{0}^{\cdot}(\Phi(\cdot,\tau,x_{m_{k}}(\tau))-\Phi(\cdot,\tau,x_{0}%
(\tau)))d\tau\underset{m\rightarrow\infty}{\longrightarrow}0
\]
in $L^{2}([0,1],\mathbb{R}^{n})$. Consequently, $\psi_{1}(x_{m_{k}})$ as a
scalar product in $L^{2}([0,1],\mathbb{R}^{n})$ of the functions $x_{m_{k}%
}^{\prime}(\cdot)-x_{0}^{\prime}(\cdot)$ and $%
{\textstyle\int\nolimits_{0}^{\cdot}}
(\Phi(\cdot,\tau,x_{m_{k}}(\tau))-\Phi(\cdot,\tau,x_{0}(\tau)))d\tau$ tends to
$0$ as $k\rightarrow\infty$. Similarly, using the growth condition on $f$ we
assert that $\psi_{2}(x_{m_{k}})\rightarrow0$. Convergence $\psi_{i}(x_{m_{k}%
})\rightarrow0$ for $i=3,...,14$ follows from the uniform convergence
$x_{m_{k}}\rightrightarrows x_{0}$.

Since $\varphi^{\prime}(x_{0})$ is linear and continuous functional on
$AC_{0}^{2}$, convergence $\varphi^{\prime}(x_{0})(x_{m_{k}}-x_{0}%
)\rightarrow0$ follows directly from the weak convergence $x_{m_{k}%
}\rightharpoonup x_{0}$ in $AC_{0}^{2}$. Convergence $\varphi^{\prime
}(x_{m_{k}})(x_{m_{k}}-x_{0})\rightarrow0$ follows from the estimation%
\[
\left\vert \varphi^{\prime}(x_{m_{k}})(x_{m_{k}}-x_{0})\right\vert
\leq\left\Vert \varphi^{\prime}(x_{m_{k}})\right\Vert _{\mathcal{L}(AC_{0}%
^{2},\mathbb{R})}\left\Vert x_{m_{k}}-x_{0}\right\Vert _{AC_{0}^{2}},
\]
boundedness of the sequence $(x_{m_{k}})$ and convergence $\varphi^{\prime
}(x_{m_{k}})\rightarrow0$.

So, all assumptions of the global implicit function theorem are satisfied.
Consequently, for any $(u,v)\in L^{\infty}(J,\mathbb{R}^{m})\times L^{\infty
}(J,\mathbb{R}^{r})$ there exists a unique solution $x_{u,v}\in AC_{0}^{2}$ of
the equation (\ref{cs}) and the mapping%
\[
\lambda:L^{\infty}(J,\mathbb{R}^{m})\times L^{\infty}(J,\mathbb{R}^{r}%
)\ni(u,v)\longmapsto x_{u,v}\in AC_{0}^{2}%
\]
is continuously differentiable in Gateaux (equivalently, in Frechet) sense on
$L^{\infty}(J,\mathbb{R}^{m})\times L^{\infty}(J,\mathbb{R}^{r})$ and the
differential $\lambda^{\prime}(u,v)$ at a point $(u,v)\in L^{\infty
}(J,\mathbb{R}^{m})\times L^{\infty}(J,\mathbb{R}^{r})$ is the following
\[
\lambda^{\prime}(u,v):L^{\infty}(J,\mathbb{R}^{m})\times L^{\infty
}(J,\mathbb{R}^{r})\ni(f,g)\longmapsto z_{f,g}\in AC_{0}^{2}%
\]
where $z_{f,g}$ is such that%
\begin{multline*}
z_{f,g}^{\prime}(t)+\int_{0}^{t}\Phi_{x}(t,\tau,x_{u,v}(\tau),u(\tau
))z_{f,g}(\tau)d\tau-f_{x}(t,x_{u,v}(t),v(t))z_{f,g}(t)\\
=-\int_{0}^{t}\Phi_{u}(t,\tau,x_{u,v}(\tau),u(\tau))f(\tau)d\tau
+f_{v}(t,x_{u,v}(t),v(t))g(t)
\end{multline*}
a.e. on $J$.

\section{Appendix}

Let us consider the following control system
\begin{equation}
x^{\prime}(t)+\int_{0}^{t}\Psi(t,\tau,x(\tau))d\tau=g(t,x(t)),\ t\in J\text{
a.e.,} \label{psig}%
\end{equation}
where $\Psi:P_{\Delta}\times\mathbb{R}^{n}\rightarrow\mathbb{R}^{n}$,
$g:J\times\mathbb{R}^{n}\rightarrow\mathbb{R}^{n}$, $x\in AC_{0}^{2}$. On the
functions $\Psi$, $g$ we assume that

\begin{itemize}
\item[$\cdot$] $\Psi(\cdot,\cdot,x)$ is measurable on $P_{\Delta}$ for any
$x\in\mathbb{R}^{n}$ and%
\[
\left\vert \Psi(t,\tau,x_{1})-\Psi(t,\tau,x_{2})\right\vert \leq M\left\vert
x_{1}-x_{2}\right\vert
\]
for $(t,\tau)\in P_{\Delta}$ a.e., $x_{1}$, $x_{2}\in\mathbb{R}^{n}$, where
$M>0$ is some constant

\item[$\cdot$] $\Psi(\cdot,\cdot,0)\in L^{2}(P_{\Delta},\mathbb{R}^{n})$

\item[$\cdot$] $g(\cdot,x)$ is measurable on $J$ for any $x\in\mathbb{R}^{n}$
and%
\[
\left\vert g(t,x_{1})-g(t,x_{2})\right\vert \leq L\left\vert x_{1}%
-x_{2}\right\vert
\]
for $t\in J$ a.e., $x_{1}$, $x_{2}\in\mathbb{R}^{n}$, where $L>0$ is some constant

\item[$\cdot$] $g(\cdot,0)\in L^{2}(J,\mathbb{R}^{n})$
\end{itemize}

It is easy to see that the existence of a solution $x$ to system (\ref{psig})
in the space $AC_{0}^{2}$ is equivalent to the existence of a solution $l$ to
system%
\begin{equation}
l(t)+\int_{0}^{t}\Psi(t,\tau,\int_{0}^{\tau}l(s)ds)d\tau=g(t,\int_{0}%
^{t}l(\tau)d\tau),\ t\in J\text{ a.e.,}%
\end{equation}
in the space $L^{2}(J,\mathbb{R}^{n})$; in such a case $x^{\prime}=l$. To
prove that the above system has a unique solution in $L^{2}(J,\mathbb{R}^{n})$
we shall show that the operator%
\begin{equation}
T:L^{2}(J,\mathbb{R}^{n})\ni l\longmapsto g(t,\int_{0}^{t}l(\tau)d\tau
)-\int_{0}^{t}\Psi(t,\tau,\int_{0}^{\tau}l(s)ds)d\tau\in L^{2}(J,\mathbb{R}%
^{n}) \label{operator}%
\end{equation}
is contracting.

\begin{theorem}
There exists a unique fixed point of the operator $T$ and, consequently,
system (\ref{psig}) has a unique solution in $AC_{0}^{2}$.
\end{theorem}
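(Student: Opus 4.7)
My plan is to apply the Banach fixed point theorem to the operator $T$ of \eqref{operator}, but in $L^{2}(J,\mathbb{R}^{n})$ equipped with a Bielecki-type weighted norm rather than the standard one, since no smallness hypothesis has been imposed on the Lipschitz constants $L$ and $M$. First I would verify that $T$ genuinely maps $L^{2}(J,\mathbb{R}^{n})$ into itself: for $l\in L^{2}$ the antiderivative $y(t)=\int_{0}^{t}l(\tau)d\tau$ is continuous with $|y(t)|\leq\sqrt{t}\,\|l\|_{L^{2}}$, and the sublinear bounds $|g(t,y)|\leq|g(t,0)|+L|y|$ and $|\Psi(t,\tau,y)|\leq|\Psi(t,\tau,0)|+M|y|$, combined with the $L^{2}$-integrability of $g(\cdot,0)$ and $\Psi(\cdot,\cdot,0)$, Cauchy--Schwarz and Fubini, show $Tl\in L^{2}(J,\mathbb{R}^{n})$.

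For the contraction estimate I introduce the norm
\begin{equation*}
\|l\|_{\lambda}=\left(\int_{0}^{1}e^{-2\lambda t}|l(t)|^{2}dt\right)^{1/2}
\end{equation*}
with a parameter $\lambda>0$ to be chosen. For $l_{1},l_{2}\in L^{2}$, the Lipschitz hypotheses give the pointwise bound
\begin{equation*}
|Tl_{1}(t)-Tl_{2}(t)|\leq L\Bigl|\int_{0}^{t}(l_{1}-l_{2})(\tau)d\tau\Bigr|+M\int_{0}^{t}\Bigl|\int_{0}^{\tau}(l_{1}-l_{2})(s)ds\Bigr|d\tau.
\end{equation*}
The crucial step is the weighted Cauchy--Schwarz estimate
\begin{equation*}
\Bigl|\int_{0}^{t}(l_{1}-l_{2})(\tau)d\tau\Bigr|\leq\frac{e^{\lambda t}}{\sqrt{2\lambda}}\|l_{1}-l_{2}\|_{\lambda},
\end{equation*}
which, combined with its one-fold integrated version for the $M$-term, yields
\begin{equation*}
|Tl_{1}(t)-Tl_{2}(t)|\leq\left(\frac{L}{\sqrt{2\lambda}}+\frac{M}{\lambda\sqrt{2\lambda}}\right)e^{\lambda t}\|l_{1}-l_{2}\|_{\lambda}.
\end{equation*}
Multiplying by $e^{-\lambda t}$, squaring and integrating over $J$ gives $\|Tl_{1}-Tl_{2}\|_{\lambda}\leq q(\lambda)\|l_{1}-l_{2}\|_{\lambda}$ with $q(\lambda)\to0$ as $\lambda\to\infty$, so any sufficiently large $\lambda$ makes $T$ a strict contraction.

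Since $e^{-2\lambda t}$ is bounded and bounded below on $[0,1]$, the norm $\|\cdot\|_{\lambda}$ is equivalent to the standard $L^{2}$ norm, so $L^{2}(J,\mathbb{R}^{n})$ remains a Banach space under $\|\cdot\|_{\lambda}$ and the Banach contraction principle produces a unique fixed point $l^{\ast}\in L^{2}(J,\mathbb{R}^{n})$ of $T$. Setting $x(t)=\int_{0}^{t}l^{\ast}(\tau)d\tau$ yields an element of $AC_{0}^{2}$ with $x^{\prime}=l^{\ast}$, and by the equivalence between \eqref{psig} and the fixed point equation for $T$ noted just before the theorem, this $x$ is the unique solution of \eqref{psig} in $AC_{0}^{2}$. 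The main obstacle is precisely the absence of any smallness condition on $L$ and $M$, which rules out contractivity in the plain $L^{2}$ norm; the Bielecki weight is the standard device that exploits the Volterra (causal) structure to synthesize the missing smallness factor $1/\sqrt{2\lambda}$.
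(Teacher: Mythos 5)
Your proposal is correct and follows essentially the same route as the paper: a Bielecki exponential weight on $L^{2}(J,\mathbb{R}^{n})$ turns $T$ into a contraction (the paper bounds the two pieces $T_{g}$ and $T_{\Psi}$ via Cauchy--Schwarz plus integration by parts to get constants $L/\sqrt{k}$ and $M/\sqrt{k}$, while you use a pointwise weighted Cauchy--Schwarz estimate, but this is only a cosmetic difference). Your additional check that $T$ maps $L^{2}$ into itself is a detail the paper omits but is welcome.
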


\begin{proof}
We shall show that there exists a positive integer $k$ such that the
operators
\[
T_{g}:L^{2}(J,\mathbb{R}^{n})\ni l\longmapsto g(t,\int_{0}^{t}l(\tau)d\tau)\in
L^{2}(J,\mathbb{R}^{n})
\]%
\[
T_{\Psi}:L^{2}(J,\mathbb{R}^{n})\ni l\longmapsto\int_{0}^{t}\Psi(t,\tau
,\int_{0}^{\tau}l(s)ds)d\tau\in L^{2}(J,\mathbb{R}^{n})
\]
are contracting with a constants $\xi_{1}$, $\xi_{2}\in(0,1/2)$, respectively,
if $L^{2}(J,\mathbb{R}^{n})$ is considered with the Bielecki norm%
\[
\left\Vert l\right\Vert _{k}=(\int_{0}^{1}e^{-kt}\left\vert l(t)\right\vert
^{2}dt)^{1/2},\ l\in L^{2}(J,\mathbb{R}^{n})\text{.}%
\]
Indeed, let us fix $k\in\mathbb{N}$. We have
\begin{multline*}
\left\Vert T_{g}(l_{1})-T_{g}(l_{2})\right\Vert _{k}^{2}\leq\int_{0}%
^{1}e^{-kt}\left\vert T_{g}(l_{1})-T_{g}(l_{2})\right\vert ^{2}dt\\
\leq L^{2}\int_{0}^{1}e^{-kt}\left\vert \int_{0}^{t}\left\vert l_{1}%
(\tau)-l_{2}(\tau)\right\vert d\tau\right\vert ^{2}dt\leq L^{2}\int_{0}%
^{1}e^{-kt}\int_{0}^{t}\left\vert l_{1}(\tau)-l_{2}(\tau)\right\vert ^{2}d\tau
dt\\
=L^{2}(-\frac{1}{k}e^{-kt}\int_{0}^{1}\left\vert l_{1}(\tau)-l_{2}%
(\tau)\right\vert ^{2}d\tau+\frac{1}{k}\int_{0}^{1}e^{-kt}\left\vert
l_{1}(t)-l_{2}(t)\right\vert ^{2}dt)\leq\frac{L^{2}}{k}\left\Vert l_{1}%
-l_{2}\right\Vert _{k}^{2}%
\end{multline*}
for $l_{1}$, $l_{2}\in L^{2}(J,\mathbb{R}^{n})$.

Similarly,%
\begin{multline*}
\left\Vert T_{\Psi}(l_{1})-T_{\Psi}(l_{2})\right\Vert _{k}^{2}\leq\int_{0}%
^{1}e^{-kt}\left\vert T_{\Psi}(l_{1})-T_{\Psi}(l_{2})\right\vert ^{2}dt\\
\leq\int_{0}^{1}e^{-kt}\left\vert \int_{0}^{t}\left\vert \Psi(t,\tau,\int
_{0}^{\tau}l_{1}(s)ds)-\Psi(t,\tau,\int_{0}^{\tau}l_{2}(s)ds)\right\vert
d\tau\right\vert ^{2}dt\\
\leq M^{2}\int_{0}^{1}e^{-kt}\left\vert \int_{0}^{t}\int_{0}^{\tau}\left\vert
l_{1}(s)-l_{2}(s)\right\vert dsd\tau\right\vert ^{2}dt\\
\leq M^{2}\int_{0}^{1}e^{-kt}\left\vert \int_{0}^{t}\left\vert l_{1}%
(\tau)-l_{2}(\tau)\right\vert d\tau\right\vert ^{2}dt\leq\frac{M^{2}}%
{k}\left\Vert l_{1}-l_{2}\right\Vert _{k}^{2}%
\end{multline*}
for $l_{1}$, $l_{2}\in L^{2}(J,\mathbb{R}^{n})$.

So, to end the proof it is sufficient to choose $k$ such that $\max
\{\sqrt{\frac{L^{2}}{k}},\sqrt{\frac{M^{2}}{k}}\}<1/2$.
\end{proof}

\noindent\textbf{Acknowledgement}. The project was financed with funds of
National Science Centre, granted on the basis of decision DEC-2011/01/B/ST7/03426.


\begin{thebibliography}{9}                                                                                                %


\bibitem {AmR}A. Ambrosetti, P. H. Rabinowitz, \textit{Dual variational
methods in critical point theory and applications}, J. Funct. Anal., 14
(1973), 349-381.

\bibitem {Id}D. Idczak, \textit{A global implicit function theorem and its
applications to functional equations}, accepted for publication in Discrete
and Continuous Dynamic Systems, Series B.

\bibitem {ISW}D. Idczak, A. Skowron, S. Walczak, \textit{On the
diffeomorphisms between Banach and Hilbert spaces}, Advanced Nonlinear Studies
12 (2012), 89-100.

\bibitem {Lakshmi}V. Lakshmikantham, M. Rao, Theory of Integro-Differential
Equations, Gordon and Brach Science Publisher, 1995.

\bibitem {Maw}J. Mawhin, \textit{Problemes de Dirichlet Variationnels Non
Lineaires}, Les Presses de l'Universite de Montreal, 1987.

\bibitem {Rabin}P. H. Rabinowitz, \textit{Minimax Methods in Critical Point
Theory with Applications to Differential Equations}, Amer. Math. Soc.,
Providence, 1986.

\bibitem {Willem}M. Willem, \textit{Minimax Theorems}, Birkhauser, Boston, 1996.
\end{thebibliography}
\end{document}